\newtheorem{thm}{Theorem}
\newtheorem{cor}[thm]{Corollary}
\newtheorem{lemma}[thm]{Lemma}
\theoremstyle{remark}
\newtheorem{rmk}{Remark}[section]
\newcommand{\fsl}{\mathfrak{sl}_2}
\newcommand{\h}{\mathrm{h}}
\newcommand{\BC}{\mathbb{C}}
\newcommand{\BN}{\mathbb{N}}
\newcommand{\BR}{\mathbb{R}}
\newcommand{\BZ}{\mathbb{Z}}
\newcommand{\U}{U_\zeta}
\newcommand{\B}{B_\zeta}
\newcommand{\D}{D_\zeta}
\newcommand{\cD}{\mathscr{D}}
\newcommand{\Gz}{\Gamma_\zeta}
\newcommand{\pz}{\psi_\zeta}
\newcommand{\hpz}{\hat{\psi}_\zeta}
\newcommand{\tz}{\tau_\zeta}
\newcommand{\ev}{\mathrm{ev}}
\renewcommand{\t}{\theta}
\newcommand{\bU}{{\U^\ev}}
\newcommand{\hU}{\hat U_\zeta}
\newcommand{\hUe}{{\hat{U}^\ev_\zeta}}
\newcommand{\bhUe}{\bar{\hat U}^\ev_\zeta}
\newcommand{\bbU}{\bar U_\zeta^\ev}
\newcommand{\Z}{\mathfrak{Z}}
\newcommand{\bZ}{\Z^\ev}
\newcommand{\bbZ}{\tilde{\Z}^\ev}
\renewcommand{\r}{\mathbf{r}}
\newcommand{\tr}{\mathrm{tr}}
\newcommand{\Inv}{\mathrm{Inv}}
\newcommand{\ad}{\mathrm{ad}}
\newcommand{\Id}{\mathrm{Id}}
\newcommand{\diag}{\mathrm{diag}}
\newcommand{\qt}[1]{\tr_q^{#1}}
\newcommand{\p}{\mathfrak{p}}
\newcommand{\q}{\mathfrak{q}}
\title[quantum invariants]{Quantum invariants of 3-manifolds associated to restricted quantum groups}
\author{Qi Chen,\ \ Chih-Chien Yu\ \  and\ \  Yu Zhang}
\address{Department of Mathematics\newline\indent
Winston-Salem State University\newline\indent 
Winston Salem, NC 27110, USA}
\email{chenqi@wssu.edu}
\address{Department of Mathematics\newline\indent
University of Arkansas at Fort Smith\newline\indent  
Fort Smith, AR 72913, USA}
\email{lyu@uafortsmith.edu} 
\address{Department of Mathematics\newline\indent
University at Buffalo\newline\indent  
The State University of New York\newline\indent  
Buffalo, NY 14260, USA}
\email{yz26@buffalo.edu}
\begin{document}

\begin{abstract}
We show that the Witten-Reshetikhin-Turaev $SU(2)$ invariant and the Hennings invariant associated to the restricted quantum $\fsl$ are essentially the same for rational homology 3-spheres.
\end{abstract}

\maketitle
\addtocounter{footnote}{1}

\section{Introduction}

After the discovery of the Jones polynomial, Witten proposed in \cite{w}
an invariant of 3-manifolds using the Chern-Simons theory.
The first `mathematically rigorous' construction of this invariant was obtained by Reshetikhin and Turaev in 
\cite{rt} using the representation theory of the quantum group $\U(\fsl)$ at a root of unity $\zeta$. This
invariant, denoted $\tz$, is now known as the Witten-Reshetikhin-Turaev $SU(2)$ invariant, or the WRT $SU(2)$ invariant in short. On the other hand Hennings
showed in \cite{hennings} that one can define a 3-manifold invariant $\pz$, called the Hennings invariant,
independent of the representation theory. In this note we will 
show that these two invariants are essentially the same. More precisely we have

\begin{thm}\label{mt}
Let $M$ be a closed 3-manifold and $\zeta$ a root of unity of order $\ell>1$ then
\begin{equation}
\pz(M) = \h(M) \tz(M)\ ,
\end{equation}
where $\h(M)$ is the order of the first homology group if it is finite and zero otherwise.
\end{thm}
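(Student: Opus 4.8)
The plan is to evaluate both invariants from a single surgery diagram and to isolate their ratio as an abelian sum controlled by the linking form of $M$. Fix a framed link $L = L_1 \cup \cdots \cup L_n \subset S^3$ with $M = S^3_L$, and let $B$ be its linking matrix and $\sigma$ its signature. For the Hennings side I would use the universal-invariant description: $L$ produces an element $J_L \in \U^{\otimes n}$ assembled from the $R$-matrix $\r$ and the twist, and $\pz(M)$ results from applying the right integral of $\U$ to each tensor factor, the framing being corrected by the integral of the inverse ribbon element so as to cancel the signature. For the WRT side I would color each component by the Kirby color $\omega = \sum_i d_i\, V_i$, the sum running over the simple modules $V_i$ of the semisimplified category with $d_i$ their quantum dimensions, evaluate by the quantum trace, and normalize by $\Gz$ and $\cD$. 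Since $\pz$ and $\tz$ are already invariants, it suffices to match them on this one diagram.

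The heart of the argument is to rewrite the right integral in representation-theoretic terms so the two evaluations become comparable. Here I would exploit the even/odd structure of $\U$: decomposing with respect to the central group-like $K^\ell$, the modular data feeding $\tz$ is carried by the even center $\bZ \subset \Z$, whereas the integral probes all of $\U$. The expected outcome is an expansion of the integral as a weighted sum of quantum traces $\qt{V}$ whose restriction to $\bU$ reproduces the Kirby-color coefficients $d_i$, so that term by term the Hennings evaluation of $J_L$ collapses onto the WRT evaluation of the $\omega$-colored diagram --- except for the summation over the remaining grading variables, one per strand, that the integral performs but the Kirby color does not.

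This leftover summation is the source of $\h(M)$. The grading variables on the $n$ strands are coupled through $B$ by the $R$-matrix phases, and after the ribbon normalization absorbs the signature they organize into a Gauss-type sum attached to the linking form of $L$. By reciprocity for such sums this quantity depends only on $M$: when $B$ is nondegenerate --- that is, when $M$ is a rational homology sphere --- it evaluates, relative to the WRT normalization, to $|\det B| = \h(M)$; when $b_1(M) > 0$ the form $B$ is degenerate, the sum acquires a positive-rank kernel and vanishes, giving $\pz(M) = 0 = \h(M)\,\tz(M)$.

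The step I expect to be genuinely hard is the middle one: producing the exact expansion of the integral over quantum traces in this non-semisimple setting, so that the even part matches the Kirby color on the nose while the odd/grading part factors off cleanly. Unlike the semisimple WRT theory, one must here keep track of the projective (non-simple) modules of $\U$ and show that their contributions either cancel or are precisely what repackages into the $\h(M)$ factor. The accompanying quantitative point --- verifying the Gauss-sum evaluation and checking that the signature phases from the Hennings framing normalization cancel against the $\Gz$-normalization of $\tz$ --- should then be a finite computation, but it is where the precise appearance of $|\det B|$, rather than a power or root of it, must be pinned down.
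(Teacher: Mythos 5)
Your outline correctly identifies where the work lies (relating the integral $\lambda$ to the Kirby color $\qt\omega$ on a single surgery presentation), but the mechanism you propose for that step is not merely ``genuinely hard'' --- it is impossible, and this is a fatal gap. You want to expand the integral as a weighted sum of quantum traces $\qt{V}$ of $\U$-modules so that the Hennings evaluation collapses term by term onto the WRT evaluation. No such expansion exists. A quantum trace $\qt{V}(x)=\tr^V(K^2x)$ is additive on composition series, so it depends only on the class of $V$ in the Grothendieck group; moreover each nilpotent central element $w_m^\pm$ acts as zero on every simple module (a central element acts on a simple by a scalar, and a nilpotent scalar is zero), hence $\qt{V}(w_m^\pm)=0$ for \emph{every} finite-dimensional $V$, simple or projective. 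The integral behaves in exactly the opposite way: by Lemma \ref{ri} it \emph{vanishes} on the subspace $\bbZ$ spanned by the idempotents $e_m$ and the $w_m$, but $\lambda(w_m^+)\neq 0$. So $\lambda$ lies outside the span of all quantum traces, and $\lambda$ and $\qt\omega$ see complementary pieces of the center ($\qt\omega$ sees only the $e_m$'s, by Lemma \ref{qt}). No bookkeeping of projective modules can repair this; the functionals simply are not comparable pointwise.

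The factor $\h(M)$ also does not arise as a Gauss sum evaluated by reciprocity (reciprocity would typically produce $|\det B|^{1/2}$ times a root of unity, not $|\det B|$). In the paper it comes from a non-semisimple, derivative-like phenomenon in the center: powers of the ribbon element have the form
\begin{equation*}
\r^{\pm n}=\sum_{m=0}^{\ell} a_{\pm,m}^{\,n}\,e_m + n\,a_{\pm,m}^{\,n-1}\bigl(b_{\pm,m}w_m+c_{\pm,m}w_m^+\bigr)\ ,
\end{equation*}
so the nilpotent component grows linearly in $n$. Since $\lambda$ reads off only the $w^+$-part and $\qt\omega$ only the $e$-part, one gets the key identity (Lemma \ref{lr}) $\lambda(x\r^{\pm n})/\lambda(\r^{\pm 1})=n\,\qt\omega(x\r^{\pm n})/\qt\omega(\r^{\pm 1})$ for $x\in\bbZ$: proportionality holds only after pairing against $\r^{\pm n}$, and it carries the factor $n$. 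Applying this once per component of a link with diagonal linking matrix $\diag(f_1,\ldots,f_m)$ yields $\prod_i|f_i|=\h(M)$ as a product of framings; this step in turn requires showing that partial evaluations of the universal invariant land in $\bbZ=\BC[C]$ (Lemma \ref{l2}, proved via the unrestricted quantum group and the De Concini--Kac description of its center), an ingredient absent from your outline. Finally, the reduction to this computation is not done for arbitrary $B$: non-diagonalizable linking matrices are handled by Ohtsuki's stabilization with lens spaces $L(n,1)$ (using $\tz(L(n,1))\neq 0$), and the case $b_1(M)>0$ is settled by quoting the known vanishing $\pz(M)=0$, not by a degenerate Gauss sum --- both of your treatments of these cases rest on the expansion that cannot exist.
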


This theorem will be proved in Section \ref{main}.
The same relation was shown in \cite{cks} for the WRT $SO(3)$ invariant and the Hennings invariant associated to the
small quantum $\fsl$. In general the restricted quantum groups are harder to deal with than the corresponding small
ones. One new obstacle in the proof of the $SU(2)$ case is to show Lemma \ref{l1}. We will
also give a much simplified proof of a key lemma, Lemma 8 in \cite{cks}. 

\begin{rmk}\label{k}
Kauffman and Radford compared $\pz$ and $\tz$ in \cite{kr} when $\ell=8$ and $M$ is the Lens space $L(k,1)$.
Their calculation confirms the above theorem. 
Note that in their corollary on page 154 $INV(L(k,1))$ should be equal to $|k|$.
\end{rmk}

As a consequence of the above theorem we have the following corollary.

\begin{cor}\label{cor}
If $(\ell, \h(M))=1$ then the WRT $SU(2)$ invariant $\tz(M)$ is always an algebraic integer.
\end{cor}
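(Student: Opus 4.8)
The plan is to deduce the Corollary from Theorem \ref{mt} by an integrality argument inside the cyclotomic ring $\BZ[\zeta]$. First observe that the hypothesis $(\ell,\h(M))=1$ forces $\h(M)$ to be a positive integer: if $\h(M)=0$ then $(\ell,\h(M))=\ell>1$, contrary to assumption. Hence $\h(M)$ is invertible in $\BQ(\zeta)$ and Theorem \ref{mt} may be rewritten as
\begin{equation}
\tz(M)=\frac{\pz(M)}{\h(M)}\ .
\end{equation}
Since $\zeta$ has order $\ell$, the field $\BQ(\zeta)$ is the $\ell$-th cyclotomic field, whose ring of integers is precisely $\BZ[\zeta]$; so proving that $\tz(M)$ is an algebraic integer is the same as proving $\tz(M)\in\BZ[\zeta]$, and by the local criterion this reduces to showing $v_\p(\tz(M))\ge 0$ for every prime ideal $\p$ of $\BZ[\zeta]$.

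Two inputs feed the argument. The first is that the Hennings invariant is itself an algebraic integer: because $\pz(M)$ is assembled from the integral, the $R$-matrix, and the ribbon element of the restricted quantum group, whose structure constants all lie in $\BZ[\zeta]$, one has $\pz(M)\in\BZ[\zeta]$, hence $v_\p(\pz(M))\ge 0$ for every $\p$. The second is that the denominators of the WRT invariant are supported only above $\ell$, that is $\tz(M)\in\BZ[\zeta][\ell^{-1}]$; this is visible from the surgery formula, whose only potentially non-integral factor is the inverse of a Gauss sum, which is a unit at every prime not dividing $\ell$.

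With these facts the valuation bookkeeping closes at once. Fix a prime ideal $\p$ of $\BZ[\zeta]$ lying over a rational prime $p$. If $p\mid\ell$, then coprimality gives $p\nmid\h(M)$, so $v_\p(\h(M))=0$ and $v_\p(\tz(M))=v_\p(\pz(M))\ge 0$. If instead $p\nmid\ell$, then $\tz(M)\in\BZ[\zeta][\ell^{-1}]$ already yields $v_\p(\tz(M))\ge 0$. As $\p$ was arbitrary, $\tz(M)\in\BZ[\zeta]$ and is therefore an algebraic integer.

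I expect the real content to lie not in this bookkeeping but in the two inputs, and chiefly in the second: confining the denominators of $\tz(M)$ to primes over $\ell$ (rather than allowing, say, an extra factor of $2$ or a framing-correction term at primes dividing $\h(M)$) is the delicate point, since an uncontrolled denominator at a prime dividing $\h(M)$ would defeat the conclusion. The integrality of $\pz(M)$, by contrast, I expect to be routine once the $\BZ[\zeta]$-structure underlying the Hennings construction is laid out.
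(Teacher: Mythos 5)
Your skeleton is the same as the paper's: use Theorem \ref{mt} to write $\tz(M)=\pz(M)/\h(M)$ (your observation that the hypothesis forces $\h(M)\neq 0$ is correct), combine the integrality of $\pz(M)$ with the Kirby--Melvin theorem \cite{km} that the only possible denominators of $\tz(M)$ are factors of $\ell$, and conclude by coprimality; the valuation bookkeeping at the end is fine. The genuine gap is in your first input, precisely the one you declare routine. The Hennings invariant is not simply assembled from structure constants lying in $\BZ[\zeta]$: it carries the normalization
\[
\pz(M)=\frac{\lambda^{\otimes m}(\Gz(T))}{\lambda(\r^{-1})^{\sigma_+}\,\lambda(\r)^{\sigma_-}}\ ,
\]
and the numbers $\lambda(\r^{\pm1})$ being divided by are not units. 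Indeed, combining (\ref{eqr}) with Lemmas \ref{ri} and \ref{qt} (the computation is implicit in the proof of Lemma \ref{lr}), one finds that $\lambda(\r^{\pm1})$ is proportional, by a constant independent of the manifold, to the Gauss sum $\qt\omega(\r^{\pm1})=\sum_m(-1)^{m+1}\t^{\pm(1-m^2)/2}[m]^2$ appearing in the WRT normalization --- and dividing by exactly this kind of Gauss sum is what produces the $\ell$-denominators that Kirby and Melvin must work to control for $\tz$. So the integrality of $\pz(M)$ is a theorem in its own right, and nothing in your proposal proves it. The paper obtains it by an indirect route: form the quantum double $\D=\cD(\B)$ of the Borel subalgebra of $\U$, cite \cite{ck} for the integrality of the Hennings invariant $\hpz$ built from $\D$, and transfer this to $\pz$ via the relation $\hpz(M)=\eta\,\pz(M)$ with $\eta$ an eighth root of unity.

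Consequently your closing difficulty assessment is inverted: confining the denominators of $\tz(M)$ to primes over $\ell$ is a citable published result \cite{km}, while the integrality of $\pz(M)$, far from being routine, is the input that requires the quantum double construction. A minor additional caution: $\pz$ and $\tz$ are defined using $\t=\zeta^{1/2}$, and the comparison with the double introduces eighth roots of unity, so these invariants need not lie in $\BQ(\zeta)$; your valuation argument should be run in a number field containing all the relevant roots of unity, which changes nothing essential in the coprimality step.
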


\begin{proof}[Sketch of Proof]
Let $\B$ be the Borel subalgebra of $\U(\fsl)$. Its quantum double
$\D := \cD(\B)$ is a ribbon Hopf algebra, c.f. \cite{kr1}. 
Let $\hpz$ be the Hennings invariant constructed from $\D$.
It is proved in \cite{ck} that $\hpz(M)$ is an algebraic integer for any closed 3-manifold $M$.
Furthermore $\hpz(M) = \eta\,\pz(M)$ for some eighth root of unity. Therefore $\pz(M)$ is also an algebraic
integer. Theorem \ref{mt} then says that the only possible denominator for $\tz(M)$ is $\h(M)$. But according to
\cite{km} the only possible denominators for $\tz(M)$ are factors of $\ell$. Since $\ell$ and $M$ are coprime
we have $\tz(M)$ is an algebraic integer.
\end{proof}

\begin{rmk}
Special cases of the corollary were studied extensively by many authors. Murakami proved it in \cite{mu} when
$\ell$ is a prime. His proof was simplified by Masbaum and Roberts in \cite{mr} using the Kauffman Bracket.
Later Habiro proved it in \cite{habiro4} for all integral homology 3-spheres without any restriction on $\ell$.
Integrality is a very important property for quantum invariants because 
integral invariants can be used to extract topological information, c.f. 
\cite{gkp, chenle1}, and to construct TQFTs over Dedekind domains, c.f. \cite{g, chenle2}.
We believe that some similar relation between the WRT and Hennings invariants exists for higher
ranked quantum groups. Our proof of the integrality can then be used to prove the integrality for all
WRT invariants.
\end{rmk}

\noindent{\em Acknowledgment.} The first author would like to thank Thang Le for his help.

\section{The restricted quantum group}

Fix a root of unity $\zeta$ of order $\ell>1$. To simplify the arguments we will just take $\zeta = e^{2\pi i/\ell}$
and $\zeta^a = e^{2\pi i a/\ell}$. Also set $\t = \zeta^{1/2}$.
The restricted quantum group $\U = \U(\fsl)$ is a $\BC$-algebra generated by $E, F, K$ and $K^{-1}$ with
the relations:
$$
K K^{-1} = K^{-1}K = 1, \quad K E = \t E K,\quad K F = \t^{-1} F K, 
$$
$$
EF - FE = \frac{K^2-K^{-2}}{\t-\t^{-1}}
$$
and
\begin{equation}\label{ell}
E^\ell = F^\ell = 0, \quad K^{4\ell} = 1\ .
\end{equation}
It is a Hopf algebra with the comultiplication $\Delta$, antipode $S$ and counit $\epsilon$ given by
$$
\Delta(K) = K\otimes K,\quad \Delta(E) = 1\otimes E + E\otimes K^2,\quad \Delta(F) = F\otimes 1 + K^{-2}\otimes F,
$$
$$
S(K) = K^{-1},\quad S(E) = -EK^{-2}, \quad S(F) = -K^2 F,
$$
$$
\epsilon(K) = 1, \quad \epsilon(E) = \epsilon(F) = 0.
$$
For $i\in\BZ/4\ell$ let
$$
\pi_i := \sum_{j=1}^{4\ell} \t^{ij} K^j.
$$
Then one has
$$
K \pi_i = \t^{-i} \pi_i, \quad E\pi_i = \pi_{i-1}E,\quad F\pi_i = \pi_{i+1}F.
$$

Recall that an element $x$ in a Hopf algebra $H$ is said to be a {\em cointegral} if
$xy = yx = \epsilon(y) x$, $\forall y \in H$. An element $f$ in $H^*$ is said to be a {\em left integral}
if $g f = g(1) f$, $\forall g\in H^*$. It is known that $\U$ contains a nonzero cointegral
$$
\Lambda = F^{\ell-1}\pi_{\ell-1}E^{\ell-1}\ \ ,
$$
and a nonzero left integral
\begin{equation}\label{lin}
\lambda(F^i K^j E^m) = \delta_{i,\ell-1}\delta_{j,2(\ell-1)}\delta_{m,\ell-1}\ \ .
\end{equation}

Denote the quantum integer by $[n] = (\t^n-\t^{-n})/(\t-\t^{-1})$ and
the quantum factorial by $[n]!=[n] [n-1] \cdots [1]$.
The Hopf algebra $\U$ is quasi-triangular with the universal $R$-matrix in $\U\otimes \U$
$$
R = D\sum_{n=0}^{\ell-1} \frac{(\t-\t^{-1})^n}{[n]!}\t^{\frac{n(n-1)}2} E^n\otimes F^n \ .
$$
Here $D$ is the diagonal part with
$$
D = \frac1{4\ell}\sum_{m,n=0}^{4\ell-1} \t^{-\frac{mn}2} K^m\otimes K^n\ \ .
$$
Furthermore $\U$ is a ribbon Hopf algebra whose ribbon element $\r$ and its inverse $\r^{-1}$, see (\ref{eqr}) below, belong to the Hopf subalgebra $\bU$ generated by $E, F$ and $K^2$. 
We follow \cite{feigin} to describe the center $\bZ$ of $\bU$. Note that their $K$ and $\q$
are equal to our $K^2$ and $\t$ respectively.
The dimension of $\bZ$ is $3\ell - 1$ with a basis $e_i, w^\pm_j$, $0\le i\le\ell$
and $1\le j\le \ell-1$. These elements satisfy
\begin{equation}\label{eqc}
e_i e_j = \delta_{ij}e_i,\quad e_i w^\pm_j = \delta_{ij} w^\pm_j, \quad w^\pm_iw^\pm_j = w^\pm_iw^\mp_j = 0.
\end{equation}
To simplify notation we will consider $w^\pm_0=w^\pm_\ell = 0$.
The ribbon element $\r$ and its inverse $\r^{-1}$ are in $\bZ$:
\begin{equation}\label{eqr}
\r^{\pm 1} = \sum_{m=0}^\ell a_{\pm,m} e_m + b_{\pm,m} w_m + c_{\pm,m} w_m^+
\end{equation}
where $w_m := w_m^+ + w_m^-$ and
$$
a_{\pm,m} = (-1)^{m+1} \t^{\pm\frac{1-m^2}2}, 
$$
$$
b_{\pm,m} = \pm(-1)^{\ell-1} (\t-\t^{-1}) \t^{\pm\frac{1-m^2}2} \frac{m}{[m]},\quad
c_{\pm,m} = -\frac{\ell\ b_{\pm,m}}m \ . 
$$

\section{The universal invariant of bottom tangles}\label{bt}
The definition of the universal invariant is the same as Section 2.2 in \cite{cks}. 
See also \cite{hennings, habiro1, o3}. We include it here for ease of reading. 
A {\em bottom tangle} is an oriented framed tangle properly embedded in $\BR^2\times[0,1)$
such that its $i$-th component starts from $(0, 2i, 0)$ and ends at $(0, 2i-1, 0)$. Note that bottom tangles do not
have circle components. Bottom tangles are considered equivalent up to ambient isotopy relative to boundary. The
universal $\U$-invariant $\Gz$ of bottom tangles can be calculated as follows. Let $T$ be a bottom tangle. Choose
a generic diagram of $T$ and label it according to Figure \ref{f1}, where $R = \sum R_1\otimes R_2$ is the universal
$R$-matrix and $S$ is the antipode.
\begin{figure}[t]
\centering
\psfrag{1}{\hspace{0ex}$R_1$}
\psfrag{2}{\hspace{-1ex}$R_2$}
\psfrag{3}{\hspace{-4ex}$S(R_1)$}
\psfrag{4}{\hspace{-1ex}$R_2$}
\psfrag{5}{$K^{-2}$}
\psfrag{6}{$K^2$}
\includegraphics[width=\textwidth]{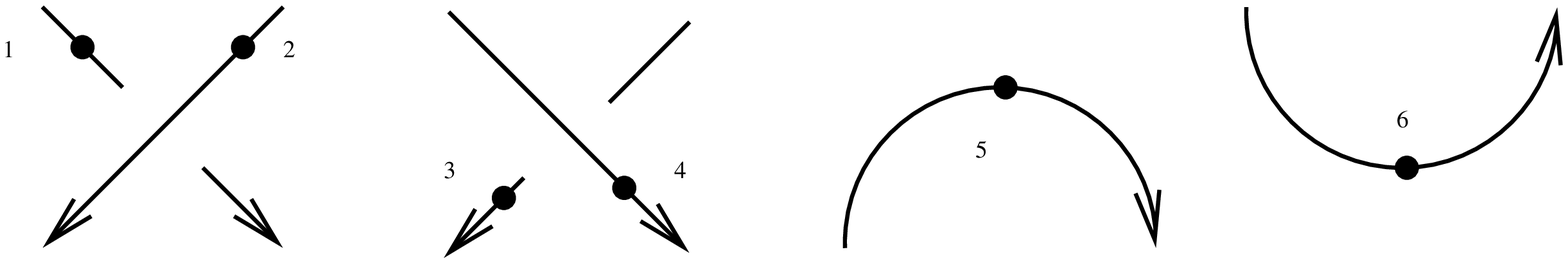}
\caption{Basic diagram labels.}\label{f1}
\end{figure}
Multiply the labels on each component, opposite to the orientation, to obtain an element in $\U$. The tensor
product of the elements from all components is the universal invariant $\Gz(T)$. Clearly if $T$ has $m$-component then
$\Gz(T)$ is in $\U^{\otimes m}$. But one can say a little more about the value of $\Gz(T)$.
Let's first recall that for a Hopf algebra $A$ and an $A$-module $W$, the invariant submodule $\Inv(W)$ is equal to
$\{ w \in W ~|~ a(w) = \epsilon(a) w, \forall a\in A\}$. The adjoint action makes $A$ to be an $A$-module, i.e.
$\ad_a(b) = \sum_{(a)} a'bS(a'')$.

\begin{lemma}\label{l1}
Let $T$ be an $m$-component bottom tangle such that $\hat T$ has 0 linking matrix. 
Then $\Gz(T) \in \Inv((\bU)^{\otimes m})$.
\end{lemma}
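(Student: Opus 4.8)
The plan is to split the statement into two independent assertions: that $\Gz(T)$ is invariant under the adjoint action of $\U$ on $\U^{\otimes m}$, and that it already lies in the subalgebra $\bU^{\otimes m}$. Since $\bU$ is a Hopf subalgebra, $\bU^{\otimes m}$ is an adjoint $\bU$-submodule of $\U^{\otimes m}$, so the two assertions combine to give $\Gz(T)\in\Inv((\bU)^{\otimes m})$. The adjoint invariance $\Gz(T)\in\Inv(\U^{\otimes m})$ is a general property of universal invariants of bottom tangles and needs no hypothesis on the linking matrix; I would quote it as part of the construction recalled in Section \ref{bt} (it is the same computation as in \cite{cks,habiro1}). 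Thus the new content, and the place where the hypothesis on $\hat T$ enters, is the claim that each tensor factor of $\Gz(T)$ involves only even powers of $K$, i.e. lies in $\bU$.

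To detect even powers of $K$ I would introduce the linear map $\omega\colon\U\to\U$ determined by $\omega(E)=E$, $\omega(F)=F$, $\omega(K)=-K$. Checking the defining relations, $\omega$ is an algebra automorphism (both $(K^2-K^{-2})/(\t-\t^{-1})$ and $K^{4\ell}=1$ are unchanged) and one verifies $\omega\circ S=S\circ\omega$ on generators. Since $\omega$ acts on a monomial $F^iK^jE^m$ by the scalar $(-1)^j$, its fixed subalgebra is exactly $\bU$. Writing $\omega_i$ for the automorphism of $\U^{\otimes m}$ applying $\omega$ in the $i$-th slot and the identity elsewhere, membership $\Gz(T)\in\bU^{\otimes m}$ is equivalent to $\omega_i(\Gz(T))=\Gz(T)$ for every $i$.

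The engine of the argument is the identity $(\omega\otimes\Id)R=(1\otimes K^{2\ell})R$ together with its mirror $(\Id\otimes\omega)R=(K^{2\ell}\otimes 1)R$. Both follow by applying $\omega$ to the Cartan factor $D$ and re-indexing the Gauss sum: using $(-1)^m=\t^{\ell m}$ and the shift $n\mapsto n+2\ell$, the only surviving effect is multiplication by the central grouplike $K^{2\ell}$, which satisfies $(K^{2\ell})^2=K^{4\ell}=1$. Because $\omega$ commutes with $S$, the same identities govern the inverse $R$-matrix and the antipode-decorated labels of Figure \ref{f1}, while the cup/cap labels $K^{\pm 2}$ are $\omega$-fixed. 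Consequently, applying $\omega$ in the $i$-th slot has the same effect as inserting one central factor $K^{2\ell}$ on the partner strand at each crossing incident to component $i$; a self-crossing of component $i$ inserts $K^{2\ell}$ twice and so contributes trivially, consistently with the vanishing of the diagonal (framing) entries. Hence $\omega_i(\Gz(T))$ equals $\Gz(T)$ multiplied, on each component $j\ne i$, by $K^{2\ell}$ raised to the number $c_{ij}$ of $i$--$j$ crossings. Since $c_{ij}\equiv 2\,\mathrm{lk}(i,j)\pmod 2$ and the linking matrix of $\hat T$ vanishes, every such exponent is even, the insertions are trivial, and $\omega_i(\Gz(T))=\Gz(T)$ for all $i$.

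The main obstacle is precisely this last, per-slot step. Applying $\omega$ in all slots at once shows effortlessly and unconditionally that $\Gz(T)$ has an \emph{even total number} of $K$-odd tensor factors, but this is much weaker than membership in $\bU^{\otimes m}$; upgrading it to evenness in \emph{each} individual factor is where the geometry of $\hat T$ is used. The delicate work is therefore to make the $K^{2\ell}$-bookkeeping precise—matching the insertions produced by $\omega_i$ to the entries of the linking matrix while correctly accounting for orientations, the antipode-labeled arcs, and the closure conventions—so that the vanishing linking matrix indeed forces every per-component insertion to be an even power of $K^{2\ell}$.
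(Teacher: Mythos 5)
Your argument is correct, but it is genuinely different from the paper's. The paper makes the same initial split (adjoint invariance is quoted from Corollary 12 of \cite{kerler3}, with no hypothesis on the linking matrix), but it proves the membership $\Gz(T)\in(\bU)^{\otimes m}$ by invoking Corollary 9.15 of \cite{habiro1}: one verifies four algebraic conditions --- closure of $\bU$ under certain $R$-matrix and adjoint operations, plus the statement that $\Gz(B)\in(\bU)^{\otimes3}$ for the Borromean bottom tangle $B$, this last point being where the diagonal parts are slid together and cancelled using the vanishing linking matrix. Your route replaces that machinery by the sign automorphism $\omega\colon K\mapsto -K$, whose fixed subalgebra is indeed $\bU$, and the identities $(\omega\otimes\Id)R=(1\otimes K^{2\ell})R$ and $(\Id\otimes\omega)R=(K^{2\ell}\otimes 1)R$; these are correct, since $\t^{\ell}=-1$ lets you absorb the sign $(-1)^m$ into a shift $n\mapsto n+2\ell$ of the dual index in $D$, and $K^{2\ell}$ is central with $(K^{2\ell})^{2}=1$, while $\omega S=S\omega$ takes care of the antipode-decorated labels and the $\omega$-fixed cap/cup labels $K^{\pm2}$. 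The per-slot bookkeeping you flag as the ``main obstacle'' is actually unproblematic, and here is the one conceptual point you got wrong: your proof never uses the hypothesis on the linking matrix at all. As you yourself write, $c_{ij}\equiv 2\,\mathrm{lk}(i,j)\pmod 2$, and the right-hand side is even \emph{whatever} the linking number is; equivalently, two distinct components of the closed diagram $\hat T$ always cross an even number of times (they are closed plane curves, and the natural closure adds no crossings since each component's endpoints are adjacent). So your argument proves the stronger, unconditional statement that $\Gz(T)\in(\bU)^{\otimes m}$ for \emph{every} bottom tangle $T$, and since the quoted adjoint invariance is also unconditional, so is the lemma itself; the zero-linking-matrix hypothesis appears in the paper's proof only because Habiro's Corollary 9.15 is a statement about algebraically split bottom tangles. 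In summary: the paper's proof outsources the topology to \cite{habiro1} and reduces everything to a few Hopf-algebraic identities (the analogue of Lemma 8 in \cite{cks}), whereas your grading argument is self-contained, more elementary, and yields a sharper result.
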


\begin{proof}
By Corollary 12 in \cite{kerler3}, $\Gz(T)\in \Inv((\U)^{\otimes m})$. 
So it is enough to show $\Gz(T)\in (\bU)^{\otimes m}$. This will follow from \cite{habiro1} Corollary 9.15, i.e. we
need to show that
\begin{enumerate}
	\item[(a)] if $x\otimes y$ is in $(\bU)^{\otimes 2}$ then so are $\sum\ad_{R_2}(y)\otimes \ad_{R_1}(x)$ and 		
	$\sum\ad_{S(R_1)}(y)\otimes \ad_{R_2}(x)$;
	\item[(b)] if $x$ is in $\bU$ then so are $\sum R_2S(\ad_{R_1}(x))$
	and $\sum S^{-1}(\ad_{R_1}(x))R_2$;
	\item[(c)] if $x$ is in $\bU$ then $\sum x'S(R_2)\otimes(\ad_{R_1}(x''))$ is in $(\bU)^{\otimes 2}$ and
	\item[(d)] $\Gz(B)$ is in $(\bU)^{\otimes 3}$ where $B$'s natural closure is the Borromean ring.
\end{enumerate}

These four statements follow from direct calculation. We will only show detail for (c, d). Set
$|F^mK^nE^p|=p-m$. Let $x\in \bU$,
\begin{align}\label{d}
&\quad \sum x'S(R_2)\otimes \ad_{R_1}(x'') \nonumber\\
&= \frac1{4\ell}\sum_{\substack{m,n,j=0\\ (x)}}^{4\ell-1}\t^{\frac{m(m-1)-nj}2}
\frac{(\t-\t^{-1})^m}{[m]!} x'S(K^jF^m)\otimes \ad_{K^n E^m}(x'')  \nonumber\\
&= \frac1{4\ell} \sum_{\substack{m,j=0\\ (x)}}^{4\ell-1}\t^{\frac{m(m-1)}2}
\frac{(\t-\t^{-1})^m}{[m]!} x'S(K^jF^m)\otimes \sum_{n=0}^{4\ell-1}\t^{\frac n2(2m+2|x''|-j)}\ad_{E^m}(x'')\ .  \nonumber
\end{align}
It belongs to $\bU$ because 
$\sum_{n=0}^{4\ell-1}\t^{\frac n2(2m+2|x''|-j)}$ vanishes when $j$ is odd. The other half of (c) can be calculated
similarly.

As for (d) we note that only the diagonal part $D$ of $R$ contributes to $\Gz$
possible elements outside of $\bU$. 
One can slide the diagonal parts to the same place using:
\begin{equation}\label{D}
D(x\otimes y) = (K^{2|y|}x\otimes y K^{2|x|}) D\ .
\end{equation}
Therefore the sliding only inserts even powers of $K$ in some places.
Since the Borromean ring has $0$ linking matrix the diagonal parts got
canceled after they are slided to the same place. Therefore $\Gz(B)$ is in $\bU$.
\end{proof}

\section{The 3-manifold invariants}
The WRT invariant and the Hennings invariant can be both 
calculated from $\Gz$ in Section \ref{bt}. Let $M$ be a closed 3-manifold and $T$ be an $m$-component
bottom tangle such that 
$M$ is the result of surgery on $\hat T$, the natural closure of $T$.
The Hennings invariant
\begin{equation}
\pz(M) = \frac{\lambda^{\otimes m}(\Gz(T))}{\lambda(\r^{-1})^{\sigma_+}\lambda(\r)^{\sigma_-}}\ ,
\end{equation}
where $\sigma_\pm$ is the number of positive/negative eigenvalues of the linking matrix of $\hat T$.
The discrepancy in sign is due to the fact that the value of $\Gz$ at the trivial bottom tangle with a positive
twist is $\r^{-1}$.

To define the WRT invariant we need to consider the representations of $\U$. 
For $1\le n\le \ell-1$ let $V_n$ be the irreducible representation of $\U$ of dimension $n$.
Recall that for any $\U$-module $V$ the quantum trace $\qt V: \U\to \BC$ is defined by
$\qt V(x) = \tr^V(K^2 x)$, where $\tr^V$ is the ordinary trace on $V$. The WRT invariant of $M$ is
$$
\tz(M) = \frac{(\qt\omega)^{\otimes m} (\Gz(T))}{\qt\omega(\r^{-1})^{\sigma_+}\qt\omega(\r)^{\sigma_-}}\ ,
$$
where $\qt\omega = \sum_{n=1}^{\ell-1} [n]\qt{V_n}$.

\section{Proof of the main theorem}\label{main}

The proof is divided into three cases:
\begin{enumerate}
	\item[(i)] $\h(M)=0$, i.e. $M$ has infinite first homology;
	\item[(ii)] $\h(M)\ne 0$ and $M$ is the result of surgery on a link with diagonal linking matrix;
	\item[(iii)] $\h(M)\ne0$ and $M$ can not be obtained by surgery on a link with diagonal linking matrix.
\end{enumerate}

Case (i) follows from the fact $\pz(M)=0$ if $M$ has infinite first homology according to \cite{o3, kerler2}.

Case (ii) will be proved in \ref{c2}.

To show (iii) we recall from \cite{o1} 
that in this case there exist lens spaces $L(n_i, 1)$, $i = 1\ldots m$ such that
$$
M' = M \# L(n_1,1) \# \cdots \# L(n_m,1)
$$
is the result of surgery on a link with diagonal linking matrix.
By (ii) we have
$\pz(M') = \h(M')\tau(M')$, which is the same as (because $\pz$, $\tz$ and $\h$ are multiplicative with respect to
connected sum):
$$
\pz(M)\prod_{i=1}^m \pz(L(n_i,1)) = \h(M) \tz(M) \prod_{i=1}^m |n_i|\tz(L(n_i,1))\ .
$$
It remains to note that $\tz(L(n_i,1))\ne 0$, c.f. \cite{lili2}, and 
$\pz(L(n_i,1)) = |n_i| \tz(L(n_i,1))$ by (ii). 

\subsection{Some lemmas about the center}\label{lc}
We will need some preparation lemmas that will lead to a proof of (ii) in \ref{c2}. Denote by
$\bbZ$ the subset of $\bZ$ spanned by $e_m$, $0\le m\le\ell$, and $w_n$, $1\le n < \ell$. The following lemma
follows from \cite{feigin} Proposition D.1.1.

\begin{lemma}\label{c}
$\bbZ = \BC[C]$ where
$$
C = FE + \frac{K^2\t+K^{-2}\t^{-1}}{(\t-\t^{-1})^2}\ .
$$
\end{lemma}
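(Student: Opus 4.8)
The plan is to exploit the block (idempotent) structure of the center together with a dimension count. Since $C$ is the quantum Casimir element of $\bU$ it is central and hence lies in $\bZ$; my first task is to check, via \cite{feigin} Proposition D.1.1, that it actually lies in the smaller subspace $\bbZ$, i.e. that in the expansion of $C$ in the basis $\{e_m,w^\pm_n\}$ the coefficients of $w^+_n$ and $w^-_n$ coincide. Granting this I would write
$$
C=\sum_{m=0}^{\ell}\beta_m e_m+\sum_{n=1}^{\ell-1}\delta_n w_n
$$
for scalars $\beta_m,\delta_n$. The relations (\ref{eqc}) show that $\bbZ$ is itself a unital subalgebra of $\bZ$ --- the products $e_me_n=\delta_{mn}e_m$, $e_mw_n=\delta_{mn}w_n$ and $w_mw_n=0$ never leave $\bbZ$ --- so this already gives $\BC[C]\subseteq\bbZ$. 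Since $\dim_{\BC}\bbZ=(\ell+1)+(\ell-1)=2\ell$, it then suffices to prove that the minimal polynomial of $C$ has degree $2\ell$, for then $\dim_{\BC}\BC[C]=2\ell=\dim_{\BC}\bbZ$ forces equality.

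To find the minimal polynomial I would read off $\beta_m$ and $\delta_n$ from the display. Multiplying by $e_m$ gives $Ce_m=\beta_m e_m+\delta_m w_m$ (with the convention $\delta_0=\delta_\ell=0$), so $\beta_m$ is the scalar part of $C$ on the $m$-th block and $\delta_m$ its nilpotent part. Evaluating $C$ on the highest weight vectors of the simple $\bU$-modules --- where $E$ acts by $0$ and $K^2$ by a power of $\t$ --- yields the $\ell+1$ scalars
$$
\beta_m=\frac{\t^m+\t^{-m}}{(\t-\t^{-1})^2}=\frac{2\cos(\pi m/\ell)}{(\t-\t^{-1})^2},\qquad 0\le m\le\ell,
$$
which are pairwise distinct because $\cos(\pi m/\ell)$ is strictly decreasing in $m$ on this range. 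On the two boundary blocks $m=0,\ell$ there is no $w_m$, so $C$ acts there as a genuine scalar and only the linear factors $(t-\beta_0)$, $(t-\beta_\ell)$ are needed.

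It remains to show that $\delta_n\ne0$ for every $1\le n\le\ell-1$, i.e. that $C$ carries a nontrivial nilpotent part on each middle block. This is the crux, and it is exactly where the non-semisimplicity of $\bU$ enters: on each such block the projective cover is a nonsplit extension, so $C$ cannot act semisimply, which forces $\delta_n\ne0$ (alternatively one reads $\delta_n$ off the explicit data of \cite{feigin}). Granting this, the relation $w_n^2=0$ gives $(C-\beta_n)e_n=\delta_n w_n\ne0$ while $(C-\beta_n)^2e_n=\delta_n^2 w_n^2=0$, so that $(t-\beta_n)^2$ is the minimal polynomial on the $n$-th block. Since the $\beta_m$ are distinct, the minimal polynomial of $C$ is therefore
$$
(t-\beta_0)(t-\beta_\ell)\prod_{n=1}^{\ell-1}(t-\beta_n)^2 ,
$$
of degree $1+1+2(\ell-1)=2\ell$, and hence $\BC[C]=\bbZ$. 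I expect the verification that every $\delta_n$ is nonzero to be the main obstacle, since the semisimple data (the distinct scalars $\beta_m$) alone recovers only the idempotent part of $\bbZ$, whereas the content of the statement is that $C$ simultaneously generates the nilpotent directions $w_n$.
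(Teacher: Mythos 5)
Your reduction is algebraically sound, and it is more informative than the paper's own proof, which consists of the single sentence that the lemma follows from \cite{feigin} Proposition D.1.1. Indeed, once one knows (a) that $C$ lies in the span $\bbZ$ of the $e_m$ and the combinations $w_n=w_n^++w_n^-$, and (b) that in the expansion $C=\sum_m\beta_m e_m+\sum_n\delta_n w_n$ the scalars $\beta_m$ are pairwise distinct and every $\delta_n\neq 0$, then the identity $p(C)=\sum_m p(\beta_m)\,e_m+\sum_n p'(\beta_n)\,\delta_n w_n$ (valid for any polynomial $p$) shows the minimal polynomial of $C$ is $(t-\beta_0)(t-\beta_\ell)\prod_{n=1}^{\ell-1}(t-\beta_n)^2$ of degree $2\ell$, and your dimension count closes the argument. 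One caveat: both the unitality of $\bbZ$ and this computation of $p(C)$ use $\sum_{m=0}^{\ell}e_m=1$, i.e.\ completeness of the block idempotents; this is true but is not contained in (\ref{eqc}) and needs to be cited alongside it.

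The genuine gap is your justification of (b). The deduction ``the projective cover is a nonsplit extension, so $C$ cannot act semisimply'' is a non sequitur: a central element of a non-semisimple block may perfectly well act semisimply on it --- the idempotent $e_n$ itself is central and acts as the identity on its block, and $\beta_n e_n$ is a central element with the right eigenvalue and zero nilpotent part. Nothing in abstract block theory excludes $\delta_n=0$; whether the Casimir ``sees'' the nilpotent direction $w_n$ is a property of this particular element, and verifying it requires computing the action of $C-\beta_n$ on the projective cover $P_n$ (it sends a vector lying over the head to a nonzero vector in the socle), or equivalently invoking the explicit expansion of $C$ in the canonical central basis given in \cite{feigin}. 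The same remark applies to your step (a): the equality of the $w_n^+$ and $w_n^-$ coefficients of $C$ is asserted, not checked, and is likewise deferred to \cite{feigin}. Since (a) and (b) are exactly the content of \cite{feigin} Proposition D.1.1, taking your ``alternatively read it off the explicit data'' branch makes the proof correct but collapses it to the paper's one-line citation; as a self-contained argument it is incomplete at precisely the point you yourself identify as the crux.
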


The next lemma deals with the value of the left integral $\lambda$, c.f. (\ref{lin}), on the center.

\begin{lemma}\label{ri}
The left integral $\lambda$ vanishes on $\bbZ$ and 
\begin{equation}\label{wn}
\lambda(w_m^+)= (-1)^{m-1}\t^{-2}\frac{[m]^3}{2\ell([\ell-1]!)^2}\ .
\end{equation}
\end{lemma}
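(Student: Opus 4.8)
The plan is to split the statement into two independent parts: the vanishing of $\lambda$ on $\bbZ$, and the evaluation of $\lambda(w_m^+)$. Once $\lambda(w_m)=0$ is known, the relation $\lambda(w_m^-)=-\lambda(w_m^+)$ is automatic from $w_m=w_m^++w_m^-$, so only these two computations are needed.

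The main tool I would introduce is the ``$S^2$-twisted trace'' property of the integral. Since $\U$ is unimodular and a direct check on generators gives $S^2(x)=K^2xK^{-2}$, the left integral satisfies
\[
\lambda(xy)=\lambda\bigl(y\,K^2xK^{-2}\bigr),\qquad x,y\in\U.
\]
I would use this first to prove $\lambda|_{\bbZ}=0$. By Lemma \ref{c} it suffices to show $\lambda(C^n)=0$ for all $n$. Set $G(a,m):=\lambda(K^{2a}C^m)$. For central $z$, cycling the rightmost $E$ in $\lambda(K^{2a}FE\,z)$ through the twisted trace gives $\lambda(K^{2a}FE\,z)=\frac{1}{\t^{2a+2}-1}\lambda(K^{2a}h\,z)$, where $h=(K^2-K^{-2})/(\t-\t^{-1})$, valid whenever $\t^{2a+2}\ne 1$. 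Substituting $C=FE+\delta$ and simplifying the resulting function of $K^2$ yields a three-term relation
\[
G(a,m)=\alpha_a\,G(a+1,m-1)+\beta_a\,G(a-1,m-1),
\]
with $\alpha_a,\beta_a$ explicit. The decisive point is that $\beta_a\propto \t^{2a}-1$ and $\alpha_a\propto \t^{2a+4}-1$, so $\beta_0=0$ and $\alpha_{\ell-2}=0$: starting from $G(0,n)$ the recursion never leaves the range $0\le a\le\ell-2$, on which every denominator $\t^{2a+2}-1$ is nonzero. Since $C^m$ contains no $F^{\ell-1}E^{\ell-1}$ term once $m<\ell-1$, one has $G(a,m)=0$ for $m\le\ell-2$; iterating the recursion down to that range forces $\lambda(C^n)=G(0,n)=0$. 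In particular $\lambda(e_m)=\lambda(w_m)=0$.

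For $\lambda(w_m^+)$ I would exploit that every central element commutes with $K$, hence in the PBW basis involves only balanced monomials $F^aK^jE^a$; since it also lies in $\bU$, only $F^aK^{2b}E^a$ occur. Consequently $\lambda$ of such an element is exactly the coefficient of the single monomial $F^{\ell-1}K^{2(\ell-1)}E^{\ell-1}$, so only the top-degree ($a=\ell-1$) part of $w_m^+$ contributes. Writing this part as $F^{\ell-1}\bigl(\sum_b\nu^{(m)}_bK^{2b}\bigr)E^{\ell-1}$, the whole problem collapses to identifying one coefficient, namely $\lambda(w_m^+)=\nu^{(m)}_{\ell-1}$. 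I would read off the ``top symbol'' $\sum_b\nu^{(m)}_bK^{2b}$ from the explicit description of $w_m^+$ in \cite{feigin}, normalized so as to satisfy the relations (\ref{eqc}).

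The hard part is this last extraction. Determining $\nu^{(m)}_{\ell-1}$ requires straightening the outer $E^{\ell-1}$ and $F^{\ell-1}$ against the rest of $w_m^+$ — which is where the factor $([\ell-1]!)^2$ appears — together with the projection onto the block labelled $m$, which produces $[m]^3$, the sign $(-1)^{m-1}$, and the normalization $1/(2\ell)$ (a discrete Fourier sum over the $2\ell$ powers of $K^2$). I expect assembling these pieces into the closed form (\ref{wn}) to be the genuine obstacle; by contrast, the vanishing statement is comparatively soft once the twisted trace and the confinement $0\le a\le\ell-2$ are in hand.
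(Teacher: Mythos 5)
Your split of the lemma into the vanishing statement and the evaluation (\ref{wn}) matches the logical structure of the paper's proof, but the two halves of your proposal fare very differently.

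For the vanishing half, your argument is correct and takes a genuinely different route. Both you and the paper reduce, via Lemma \ref{c}, to showing $\lambda(C^n)=0$, and both dispose of $n\le\ell-2$ by noting that no monomial $F^{\ell-1}K^{j}E^{\ell-1}$ can occur; but for large $n$ the paper invokes the identity (\ref{kl}) (equation (3.6) of \cite{feigin}), which writes $K^{2\ell}$ as a polynomial in $C$, and runs an induction, whereas you derive a self-contained recursion from the twisted-trace property $\lambda(xy)=\lambda(yK^2xK^{-2})$. I checked your recursion: with $G(a,m)=\lambda(K^{2a}C^m)$ one gets
\[
G(a,m)=\frac{\t^{-1}(\t^{2a+4}-1)}{(\t^{2a+2}-1)(\t-\t^{-1})^2}\,G(a+1,m-1)
+\frac{\t(\t^{2a}-1)}{(\t^{2a+2}-1)(\t-\t^{-1})^2}\,G(a-1,m-1),
\]
so your $\alpha_a$, $\beta_a$ are indeed proportional to $\t^{2a+4}-1$ and $\t^{2a}-1$; hence $\beta_0=0$, $\alpha_{\ell-2}=0$, the recursion is confined to $0\le a\le\ell-2$ where $\t^{2a+2}=\zeta^{a+1}\ne1$, and it terminates in the region $m\le\ell-2$ where everything vanishes by degree count. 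The twisted-trace identity you assert without proof is also correct, and with the correct direction: $S^2=\ad_{K^2}$ (not $S^{-2}$) is the Nakayama automorphism here, which one can verify from (\ref{lin}) by checking $\lambda(gx)=\lambda(x\,K^2gK^{-2})$ for $g=E,F,K$ and extending multiplicatively; had the direction been the other one, your denominators would have been $\t^{2a-2}-1$ and the confinement argument would break at $a=1$. Your route has the merit of avoiding (\ref{kl}) entirely.

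The evaluation half, however, contains a genuine gap: formula (\ref{wn}) is never actually derived. Your reduction is fine — central elements of $\bU$ are spanned by balanced monomials $F^aK^{2b}E^a$, so $\lambda(w_m^+)$ equals the single coefficient $\nu^{(m)}_{\ell-1}$ of $F^{\ell-1}K^{2(\ell-1)}E^{\ell-1}$ — but you then only predict where each factor of the answer ``should'' come from ($([\ell-1]!)^2$ from straightening, $[m]^3$ and the sign from the block projection, $1/2\ell$ from a Fourier sum) and explicitly concede that assembling them is the unresolved obstacle. That is pattern-matching against the stated answer, not a proof; nothing in your argument excludes, say, a different power of $[m]$ or a stray power of $\t$. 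The paper closes exactly this step by citing the explicit formula (4.19) of \cite{feigin} for $w_m^\pm$, from which (\ref{wn}) follows by reading off the coefficient. To complete your proof you would need that explicit expression (or an equivalent construction of $w_m^\pm$ via the projective $\U$-modules) and would have to carry out the extraction you deferred.
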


\begin{proof} 
By Lemma \ref{c} the first half is equivalent to
\begin{equation}\label{ci}
\lambda(C^i) = 0, \quad i = 0,1,2,\ldots
\end{equation}
Because
$$
C^i = F^iE^i + \text{terms with lower degree of } E
$$
we see that (\ref{ci}) holds for $0\le i<\ell-1$.
It is known, c.f. (3.6) in \cite{feigin}, that
\begin{equation}\label{kl}
K^{2\ell} = \frac12\sum_{i=0}^{\lfloor\frac p2\rfloor} 
\frac{(-1)^{i-1}\ell}{\ell-i}\binom{\ell-i}i (\t-\t^{-1})^{2(\ell-2 i)} C^{\ell-2i} \ .
\end{equation}
This implies (\ref{ci}) by induction. Equation (\ref{wn}) follows from (4.19) in \cite{feigin}.
\end{proof}

Next we discuss the restriction of $\qt\omega$ on $\bbZ$.

\begin{lemma}\label{qt}
We have
\begin{equation}\label{eqt1}
\qt\omega(w_m^\pm) = \qt\omega(w_m) = 0, \quad 1\le m <\ell\ ,
\end{equation}
and
\begin{equation}\label{eqt2}
\qt\omega(e_m) = [m]^2\ .
\end{equation}
\end{lemma}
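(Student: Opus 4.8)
The plan is to reduce both equations to the scalars by which the central elements $e_m$ and $w_m^\pm$ act on the simple modules $V_n$. Since $\qt\omega=\sum_{n=1}^{\ell-1}[n]\qt{V_n}$ and each $\qt{V_n}$ is a quantum trace, the key observation is that for any $z\in\bZ$, Schur's lemma makes $z$ act on the simple module $V_n$ by a scalar $\chi_n(z)$, whence $\qt{V_n}(z)=\tr^{V_n}(K^2 z)=\chi_n(z)\,\tr^{V_n}(K^2)$. Thus everything splits into computing the scalars $\chi_n(e_m),\chi_n(w_m^\pm)$ and the quantum dimensions $\tr^{V_n}(K^2)$.

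First I would dispose of \eqref{eqt1}. By \eqref{eqc} we have $(w_m^\pm)^2=0$, so $w_m^\pm$ is a nilpotent central element and its scalar satisfies $\chi_n(w_m^\pm)^2=0$, forcing $\chi_n(w_m^\pm)=0$ for every $n$. Hence $\qt{V_n}(w_m^\pm)=0$, so $\qt\omega(w_m^\pm)=0$ and, summing the two signs, $\qt\omega(w_m)=0$.

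For \eqref{eqt2} I would locate the idempotents via the Casimir element $C$ of Lemma \ref{c}. Evaluating $C$ on the highest-weight vector of $V_n$ (killed by $E$, with $K^2$ acting by $\t^{n-1}$) gives $\chi_n(C)=(\t^n+\t^{-n})/(\t-\t^{-1})^2$, and because $\t$ has order $2\ell$ these values are pairwise distinct for $1\le n\le\ell-1$. Since $e_m\in\bbZ=\BC[C]$ is the block idempotent, it follows that $\chi_n(e_m)=\delta_{mn}$ on this range, while the two Steinberg blocks carrying $e_0$ and $e_\ell$ (Casimir value $\pm2/(\t-\t^{-1})^2$) contribute nothing to any $V_n$ with $1\le n\le\ell-1$. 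A direct weight count then gives $\tr^{V_n}(K^2)=\sum_{j=0}^{n-1}\t^{\,n-1-2j}=[n]$. Combining, $\qt\omega(e_m)=\sum_{n=1}^{\ell-1}[n]\,\chi_n(e_m)\,[n]=[m]^2$, the boundary cases $m=0,\ell$ being covered by $[0]=[\ell]=0$.

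The step I expect to be most delicate is matching Feigin et al's labelling of the idempotents $e_m$ with our enumeration of the simple modules, i.e.\ confirming that the unique WRT module $V_n$ ($1\le n\le\ell-1$) lying in the block of $e_m$ is exactly $V_m$. The Casimir-eigenvalue computation is precisely what removes this ambiguity, so the crux is to check that $e_m$ is the idempotent supported on the eigenvalue $(\t^m+\t^{-m})/(\t-\t^{-1})^2$; the nilpotency and quantum-dimension computations are then routine.
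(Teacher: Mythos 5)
Your proof is correct and is essentially the paper's own argument: the paper likewise reduces both identities to the Casimir eigenvalue $C|_{V_n}=\frac{\t^n+\t^{-n}}{(\t-\t^{-1})^2}\,\Id|_{V_n}$ and then invokes the explicit description of $e_m$ and $w_m^\pm$ in (D.3--5) of \cite{feigin}, which is precisely the labelling fact you isolate as the crux. Your nilpotency-plus-Schur argument for \eqref{eqt1} and the weight count $\tr^{V_n}(K^2)=[n]$ simply fill in what the paper dismisses as ``follows easily.''
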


\begin{proof}
Recall that $V_n$ is the $n$-dimensional $\U$-module such that
$$C|_{V_n} = \frac{\t^n+\t^{-n}}{(\t-\t^{-1})^2}\  \Id|_{V_n}\ .$$
This lemma follows easily from the above equation and (D.3-5) in \cite{feigin}.
\end{proof}

It turns out that $\lambda$ and $\qt\omega$ are closely related:
\begin{lemma}\label{lr}
For any $x\in \bbZ$ and $n\in\BN$,
\begin{equation}\label{er}
\frac{\lambda(x\r^{\pm n})}{\lambda(\r^{\pm1})} = n\ \frac{\qt\omega(x\r^{\pm n})}{\qt\omega(\r^{\pm1})}
\end{equation}
\end{lemma}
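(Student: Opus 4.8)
The plan is to exploit the very explicit description of the center. Since both sides of (\ref{er}) are linear in $x$, and $\bbZ$ is spanned by the idempotents $e_k$ ($0\le k\le\ell$) together with the elements $w_k=w_k^++w_k^-$ ($1\le k\le\ell-1$), it suffices to verify the identity when $x$ runs over this basis. The key computational input is an explicit formula for $\r^{\pm n}$. Because $\r^{\pm1}\in\bZ$ is given by (\ref{eqr}) and the multiplication in $\bZ$ is governed by (\ref{eqc}), the algebra $\bZ$ splits as a direct sum of blocks, one for each $k$, with $e_k$ the local unit and $w_k^\pm$ nilpotents satisfying $w_k^\pm w_k^\pm=w_k^+w_k^-=0$. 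In the $k$-th block $\r^{\pm1}$ has the form $a_{\pm,k}e_k+(\text{nilpotent})$, so raising to the $n$-th power only retains the linear term of the nilpotent part:
\begin{equation}
\r^{\pm n}=\sum_{k=0}^{\ell} a_{\pm,k}^{\,n}e_k + n\sum_{k=1}^{\ell-1} a_{\pm,k}^{\,n-1}\bigl((b_{\pm,k}+c_{\pm,k})w_k^+ + b_{\pm,k}w_k^-\bigr).
\end{equation}
This is where the factor $n$ on the right-hand side of (\ref{er}) will ultimately come from: $\lambda$ detects the nilpotent part (scaled by $n\,a_{\pm,k}^{\,n-1}$), while $\qt\omega$ detects the idempotent part (scaled by $a_{\pm,k}^{\,n}$).

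For $x=w_k$ I expect the identity to hold trivially. Using (\ref{eqc}) one gets $w_k\r^{\pm n}=a_{\pm,k}^{\,n}w_k$, and since $w_k\in\bbZ$ both Lemma \ref{ri} and (\ref{eqt1}) give $\lambda(w_k)=\qt\omega(w_k)=0$; hence both sides of (\ref{er}) vanish.

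The substantive case is $x=e_k$. Multiplying by $e_k$ selects the $k$-th block, so from the displayed formula together with the evaluations $\lambda(e_k)=0$, $\lambda(w_k^-)=-\lambda(w_k^+)$ (Lemma \ref{ri}) and $\qt\omega(e_k)=[k]^2$, $\qt\omega(w_k^\pm)=0$ (Lemma \ref{qt}), I would obtain
\begin{equation}
\lambda(e_k\r^{\pm n})=n\,a_{\pm,k}^{\,n-1}c_{\pm,k}\,\lambda(w_k^+),\qquad \qt\omega(e_k\r^{\pm n})=a_{\pm,k}^{\,n}[k]^2.
\end{equation}
Comparing these with the $n=1$ values $\lambda(\r^{\pm1})=\sum_k c_{\pm,k}\lambda(w_k^+)$ and $\qt\omega(\r^{\pm1})=\sum_k a_{\pm,k}[k]^2$, the claimed identity (\ref{er}) reduces to the single assertion that the ratio $c_{\pm,k}\,\lambda(w_k^+)/(a_{\pm,k}[k]^2)$ is independent of $k$: once this is known, calling the common value $\rho_\pm$, one has $c_{\pm,k}\lambda(w_k^+)=\rho_\pm a_{\pm,k}[k]^2$ termwise and $\lambda(\r^{\pm1})=\rho_\pm\qt\omega(\r^{\pm1})$ after summation, and substituting makes both sides of (\ref{er}) equal to $n\,a_{\pm,k}^{\,n}[k]^2/\qt\omega(\r^{\pm1})$.

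The main obstacle is thus the final step: verifying the $k$-independence of $c_{\pm,k}\lambda(w_k^+)/(a_{\pm,k}[k]^2)$. This is a direct calculation inserting the explicit expressions $a_{\pm,k}=(-1)^{k+1}\t^{\pm\frac{1-k^2}2}$, $c_{\pm,k}=-\ell b_{\pm,k}/k$ with $b_{\pm,k}=\pm(-1)^{\ell-1}(\t-\t^{-1})\t^{\pm\frac{1-k^2}2}\,k/[k]$, and $\lambda(w_k^+)$ from (\ref{wn}); I expect all $k$-dependence (the signs $(-1)^{k}$, the powers $\t^{\pm(1-k^2)/2}$, and the factors $[k]$) to cancel, leaving the $k$-free constant $\rho_\pm=\mp(-1)^{\ell-1}(\t-\t^{-1})\t^{-2}/\bigl(2([\ell-1]!)^2\bigr)$. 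Finally I should record that the denominators $\lambda(\r^{\pm1})$ and $\qt\omega(\r^{\pm1})$ are nonzero, which follows since $\rho_\pm\ne0$ and $\qt\omega(\r^{\pm1})$ is the (nonvanishing) normalizing factor appearing in the WRT invariant.
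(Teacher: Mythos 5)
Your proposal is correct and follows essentially the same route as the paper: expand $\r^{\pm n}$ via the block structure of $\bZ$ (nilpotents squaring to zero give the factor $n$), reduce by linearity to $x=e_k$ and $x=w_k$, dispose of the $w_k$ case trivially, and for $e_k$ verify that $c_{\pm,k}\lambda(w_k^+)$ is a $k$-independent multiple of $a_{\pm,k}\qt\omega(e_k)$ using the explicit constants and Lemmas \ref{ri} and \ref{qt}. The paper's chain of equalities performs exactly this cancellation without naming the constant $\rho_\pm$, so your write-up is just a slightly more structured presentation of the same computation.
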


\begin{proof}
From (\ref{eqc}, \ref{eqr}) we have
$$
\r^{\pm n} = \sum_{m=0}^\ell a^n_{\pm,m}e_m + n\, a^{n-1}_{\pm,m}\left(b_{\pm,m}w_m + c_{\pm,m}w^+_m\right)\ .
$$
We only need to prove (\ref{er}) for $x=e_j$ and $x=w_j$.
If $x = e_j$ then
\begin{align}\label{eej}
\frac{\lambda(x\r^{\pm n})}{\lambda(\r^{\pm1})} 
& = n\ \frac{a^{n-1}_{\pm,j}c_{\pm,j}\lambda(w^+_j)}{\sum_{m=0}^{\ell} c_{\pm,m}\lambda(w^+_m)} 
 = n\ \frac{(-1)^{(j-1)n}\t^{\pm\frac{(1-j^2)n}2}[j]^2}{\sum_{m=0}^{\ell}(-1)^{m-1}\t^{\pm\frac{1-m^2}2}[m]^2}
\nonumber \\
& = n\ \frac{a^n_{\pm,j}\qt\omega(e_j)}{\sum_{m=0}^\ell a_{\pm,m}\qt\omega(e_m)} = n\ \frac{\qt\omega(x\r^{\pm n})}{\qt\omega(\r^{\pm1})}\ .\nonumber
\end{align}

If $x = w_j$ then both sides of (\ref{er}) is 0.
\end{proof}

\subsection{An improvement of Lemma \ref{l1}}\label{li}
For any $A$-module $W$ of a Hopf algebra $A$ set
$$
\bar W = W/\{a x - \epsilon(a) x, \forall a\in A, x\in W\}\ . 
$$
It is clear that $\bar W$ inherits a trivial $A$-module structure from $W$.

Since $\U$ contains a cointegral $\Lambda$, $\lambda$ factors through $\bar\U$, c.f. Proposition 8 in \cite{larson}.
It is known that $\qt{V}$ also factors through
$\bar\U$ for any $\U$-module $V$, c.f. Section 7.2 in \cite{habiro1}.
We will also need the following lemma whose proof is immediate.

\begin{lemma}\label{lf}
For any $z\in\bZ$, $\lambda^z$ and $(\qt\omega)^z$
both factor through $\bbU$, where
$$
\lambda^z(x) := \lambda(xz), \quad\text{and}\quad (\qt\omega)^z(x) := \qt\omega(xz), \quad \forall x\in\bU\ .
$$
\end{lemma}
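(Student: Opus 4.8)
The plan is to verify directly that both $\lambda^z$ and $(\qt\omega)^z$ annihilate the defining relations of $\bbU$, namely the elements $\ad_a(x) - \epsilon(a)x$ with $a, x \in \bU$; once this is checked, the two functionals descend to the quotient by definition of $\bbU$. The inputs I would assemble are, first, the \emph{ad-invariance} of $\lambda$ and of $\qt\omega$ — this is exactly the content recalled just above, that $\lambda$ and each $\qt{V}$ factor through $\bar\U$, so that $\lambda(\ad_a(y)) = \epsilon(a)\lambda(y)$ and, by linearity of $\qt\omega = \sum_n [n]\qt{V_n}$, also $\qt\omega(\ad_a(y)) = \epsilon(a)\qt\omega(y)$ — and second, the centrality of $z$ in $\bU$.

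The crux is the observation that centrality lets one slide $z$ through the adjoint action. For $a, x \in \bU$ and $z \in \bZ$ I would compute
\begin{equation*}
\ad_a(x)\,z = \sum_{(a)} a'xS(a'')\,z = \sum_{(a)} a'x\,z\,S(a'') = \sum_{(a)} a'(xz)S(a'') = \ad_a(xz),
\end{equation*}
the middle equality using that $z$ commutes with $S(a'')\in\bU$. Feeding this into ad-invariance gives
\begin{equation*}
\lambda^z(\ad_a(x)) = \lambda(\ad_a(x)z) = \lambda(\ad_a(xz)) = \epsilon(a)\lambda(xz) = \epsilon(a)\lambda^z(x),
\end{equation*}
so $\lambda^z$ kills every $\ad_a(x) - \epsilon(a)x$ and therefore factors through $\bbU$. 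The identical computation with $\qt\omega$ in place of $\lambda$ disposes of $(\qt\omega)^z$.

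The only point that needs a moment's care — and the reason ad-invariance must be invoked in the correct form — is that $\bbU$ is the quotient by the adjoint action of $\bU$ on \emph{itself}, whereas the factorizations recalled above are stated for the adjoint action of the full $\U$. Since $\bU\subset\U$ is a Hopf subalgebra, the relations $\ad_a(x)-\epsilon(a)x$ with $a\in\bU$ form a subset of those with $a\in\U$, so vanishing on the larger family forces vanishing on the smaller one; this is what makes the argument go through with no extra work. Accordingly I expect no genuine obstacle, and the proof really is immediate once these two facts are placed side by side.
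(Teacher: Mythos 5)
Your proof is correct and is exactly the argument the paper has in mind: the paper declares the lemma ``immediate'' after recalling that $\lambda$ and $\qt{V}$ factor through $\bar\U$, and your computation $\ad_a(x)z=\ad_a(xz)$ for central $z$ combined with that ad-invariance is the intended content. Your closing remark about the $\bU$- versus $\U$-adjoint relations is a sound (and correctly resolved) point of care that the paper leaves implicit.
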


The following key lemma is an improvement of Lemma \ref{l1}. It is similar to Lemma 8 in \cite{cks}, which
was proved in a much more complicated way.

\begin{lemma}\label{l2}
Let $T$ be an $m$-component bottom tangle whose natural closure $\hat T$ has 0 linking matrix.
If $\chi_i: \bU\to \BC$ factors through $\bbU$ then 
$(\Id\otimes\chi_2\otimes\cdots\otimes\chi_m) \Gz(T)$ belongs to $\bbZ$.
\end{lemma}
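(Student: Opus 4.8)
The plan is to combine the two facts we have already established: Lemma \ref{l1} tells us that $\Gz(T)\in\Inv((\bU)^{\otimes m})$, and Lemma \ref{lf} (together with the discussion preceding it) tells us that maps of the form $\lambda^z$ and $(\qt\omega)^z$, and more generally any linear functional factoring through $\bbU$, are the right objects to pair against. The key structural observation I would use is that if $w\in\Inv((\bU)^{\otimes m})$ and we apply a functional $\chi_i\colon\bU\to\BC$ that factors through $\bbU$ in one of the tensor slots, the resulting element of $(\bU)^{\otimes(m-1)}$ should again be invariant, because the invariance of $w$ under the $m$-fold adjoint action restricts to invariance under the adjoint action in the remaining slots once the $i$-th slot has been contracted with a trivial-module functional. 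Iterating this over $i=2,\dots,m$ reduces us to the single-tensor case: $(\Id\otimes\chi_2\otimes\cdots\otimes\chi_m)\Gz(T)\in\Inv(\bU)$, i.e. it lies in the center $\bZ$.

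Next I would pin down that the result lands not merely in $\bZ$ but in the smaller subspace $\bbZ$. Recall $\bZ$ has basis $e_i,w^\pm_j$, and $\bbZ$ is spanned by the $e_m$ and by the symmetric combinations $w_n=w_n^++w_n^-$; so what must be ruled out is a nonzero coefficient on the antisymmetric part $w_n^+-w_n^-$. Here I expect the grading to do the work. Writing $|F^mK^nE^p|=p-m$ as in the proof of Lemma \ref{l1}, the element $\Gz(T)$ is controlled by the diagonal $R$-matrix contributions; because $\hat T$ has $0$ linking matrix, the sliding relation (\ref{D}) forces the total $K$-degree in each slot to be even, which is exactly the condition that selects $\bbU$. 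Pairing the contracted slots against functionals $\chi_i$ that factor through $\bbU$ is consistent with this, and I would argue that the surviving central element must be symmetric under the $w^+\leftrightarrow w^-$ involution — the involution that distinguishes the two Verma-type blocks — so its $w_n^+-w_n^-$ component vanishes and it lies in $\bbZ$.

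Concretely, I would organize the argument as follows. First, establish the reduction lemma: contracting one tensor slot of an invariant element against a $\bbU$-factoring functional preserves invariance in the remaining slots; prove this by unwinding the definition of $\Inv$ and the coproduct, using that $\chi_i(\ad_a x)=\epsilon(a)\chi_i(x)$ precisely because $\chi_i$ kills $ax-\epsilon(a)x$. Second, apply this $m-1$ times to land in $\bZ$. Third, identify the symmetry obstruction and show the antisymmetric components $w_n^+-w_n^-$ cannot appear, using the $0$-linking-matrix hypothesis through (\ref{D}) and the even-$K$-degree constraint that defines $\bbU$.

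I expect the third step to be the main obstacle. The reduction to $\bZ$ is essentially formal, but showing membership in the \emph{smaller} space $\bbZ$ requires a genuine argument that the two blocks $w_n^+$ and $w_n^-$ occur with equal coefficient. The cleanest route is probably to exhibit an algebra automorphism (or anti-automorphism) of $\bU$ that fixes everything seen by the $\bbU$-factoring functionals and by $\Gz(T)$ — for instance one exchanging $E\leftrightarrow F$ up to powers of $K$ — and that acts on $\bZ$ by swapping $w_n^+\leftrightarrow w_n^-$ while fixing $e_m$; invariance of $\Gz(T)$ under this symmetry then immediately forces the output into the fixed subspace $\bbZ$. Verifying that such a symmetry is compatible with the hypotheses on the $\chi_i$ and with the construction of $\Gz$ is where the care will be needed.
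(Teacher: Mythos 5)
Your first two steps are sound, and in fact they mirror what the paper does: contracting an invariant element of $(\bU)^{\otimes m}$ against functionals that kill $ax-\epsilon(a)x$ preserves invariance in the remaining slots, so $(\Id\otimes\chi_2\otimes\cdots\otimes\chi_m)\Gz(T)$ lands in $\Inv(\bU)=\bZ$. But the entire content of the lemma is the passage from $\bZ$ to the proper subspace $\bbZ$, and there your proposal has a genuine gap --- one you yourself flag as the main obstacle. First, the ``even $K$-degree'' constraint cannot do any work at this stage: every element of $\bZ$, in particular each $w_n^+$ and each $w_n^-$ separately, already lies in $\bU$, so no grading or linking-matrix argument of that kind can distinguish the symmetric combination $w_n$ from the antisymmetric one. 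Second, the symmetry you hope to exploit does not exist in the form you need. The natural candidate, the involution $\omega\colon E\mapsto F$, $F\mapsto E$, $K\mapsto K^{-1}$, is indeed an algebra automorphism, but it sends the $R$-matrix to its flip: since the diagonal part $D$ is symmetric and invariant under $K\mapsto K^{-1}$ in both factors, one gets $(\omega\otimes\omega)(R)=R_{21}$. Consequently $\omega$ does not fix $\Gz(T)$; it relates $\Gz(T)$ to the invariant of a mirror-image tangle. Moreover, even granting such equivariance, the fixed-subspace argument needs $\chi_i\circ\omega=\chi_i$ for each $i$, and the $\chi_i$ are arbitrary functionals subject only to factoring through $\bbU$, so this fails in general. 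Thus the crucial step --- ruling out the components $w_n^+-w_n^-$ --- remains unproven.

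The paper circumvents the $w_n^\pm$ entirely by a different mechanism, which is worth internalizing: it lifts the computation to the algebra $\hU$ defined \emph{without} the relations $E^\ell=F^\ell=0$, $K^{4\ell}=1$. Because $\hat T$ has zero linking matrix, the diagonal parts cancel and the universal invariant $\hat{\Gamma}_\zeta(T)$ is defined in $(\hUe)^{\otimes m}$ and projects onto $\Gz(T)$. The same contraction argument as your step one then places the contracted element in the center of $\hUe$, which by De Concini--Kac \cite{dk} is generated by $E^\ell$, $F^\ell$, $K^{\pm\ell}$ and the Casimir $C$. Projecting back to $\U$ kills $E^\ell$ and $F^\ell$, and $K^{\pm\ell}$ is itself a polynomial in $C$ by (\ref{kl}); hence the image lies in $\BC[C]$, which is exactly $\bbZ$ by Lemma \ref{c}. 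In other words, the reason the antisymmetric elements $w_n^+-w_n^-$ cannot appear is not a symmetry of $\bU$ but the fact that the contracted invariant comes from the center of the unrestricted quantum group, where no analogue of these elements exists. To salvage your approach you would need a genuinely new structural reason for the $w^+\leftrightarrow w^-$ symmetry of the output; absent that, the lift to $\hU$ is the step your proof is missing.
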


\begin{proof}
We need another version of quantum $\fsl$. Let $\hU$ be the $\BC$-algebra generated by the same generators with
the same relations as $\U$ but omitting (\ref{ell}). Denote by $\p$ the canonical projection of $\hU$ to $\U$. 
The algebra $\hU$ itself is not quasi-triangular but there exists
in some completion of $\hU\otimes\hU$ a universal $R$-matrix
$$
\hat R = \hat D \sum_{n=0}^\infty \frac{(\t-\t^{-1})^n}{[n]!}\t^{\frac{n(n-1)}2} E^n\otimes F^n \ .
$$
Here $\hat D$ is the diagonal part, which satisfies the same relation as $D$ in (\ref{D}):
\begin{equation}\label{hD}
\hat D(x\otimes y) = (K^{2|y|}x\otimes y K^{2|x|}) \hat D\ .
\end{equation}
One can use Fig. \ref{f1} to define the universal invariant $\hat{\Gamma}_\zeta$ associated to $\hU$.
Since $\hat T$ has $0$ linking matrix one can cancel the diagonal parts by sliding them to the same place
using (\ref{hD}). Comparing the formulas of $R$ and $\hat R$ 
it is then clear that 
\begin{equation}\label{p}
\p\circ\hat{\Gamma}_\zeta(T) = \Gz(T)\ .
\end{equation}
The proof of Lemma \ref{l1} can be used word for word to show that $\hat{\Gamma}_\zeta(T)\in (\hUe)^{\otimes m}$.
Since $\bhUe$ inherits a trivial $\hUe$-module structure from the adjoint action and 
$\hat\chi_i:= \chi\circ\p$ factors through
$\bhUe$ we have
$$
(\Id\otimes\hat\chi_2\otimes\cdots\otimes\hat\chi_m) \hat{\Gamma}_\zeta(T) \in \Inv(\hUe) = \text{Center}(\hUe)\ .
$$
According to \cite{dk} Theorem 4.2, $\text{Center}(\hUe)$ is generated by $E^\ell$, $F^\ell$, $K^{\pm\ell}$
and $C$. Note that
their $K$ and $\epsilon$ are equal to our $K^2$ and $\t$ respectively. From (\ref{p}) we have
$$
(\Id\otimes\chi_2\otimes\cdots\otimes\chi_m) \Gz(T) = \p\circ
(\Id\otimes\hat\chi_2\otimes\cdots\otimes\hat\chi_m) \hat{\Gamma}_\zeta(T)\ ,
$$
which is a polynomial in $K^{\pm\ell}$ and $C$. It remains to note that $K^{\pm\ell}$ can be expressed as
a polynomial in $C$, c.f. (4.2.8) in \cite{dk} and (\ref{kl}).
\end{proof}

\subsection{Proof of (ii)}\label{c2}
Let $T$ be an $m$-component bottom tangle whose natural closure $\hat T$ has diagonal linking matrix
$\diag(f_1, f_2, \cdots, f_m)$. Suppose $M$ is the result of surgery on $\hat T$. Then
$\h(M) = |f_1\cdots f_m|$. Also assume that
$f_1, \ldots, f_i>0$ and $f_{i+1},\ldots, f_m < 0$.
Let $T_0$ be the bottom tangle obtained from $T$ by changing the framing on each component to 0.
We have
\begin{align}\label{last}
\pz(M) &= \frac{\lambda^{\otimes m}(\Gz(T))}{\lambda(\r^{-1})^{i}\lambda(\r)^{m-i}}
= \frac{\lambda^{\r^{-f_1}}\otimes\cdots\otimes\lambda^{\r^{f_m}}(\Gz(T_0))}{\lambda(\r^{-1})^{i}\lambda(\r)^{m-i}}
\nonumber\\ 
&= \frac{\lambda^{\r^{-f_1}}}{\lambda(\r^{-1})}\left( \frac{\Id\otimes\lambda^{\r^{-f_2}}\otimes\cdots\otimes\lambda^{\r^{f_m}}}
{\lambda(\r^{-1})^{i-1}\lambda(\r)^{m-i}}(\Gz(T_0))\right)
\nonumber\\ 
\left(\substack{\text{by Lemmas} \\ \text{\ref{lr} and \ref{l2}}}\right) &= 
|f_1|\ \frac{(\qt\omega)^{\r^{-f_1}}}{\qt\omega(\r^{-1})}\left( \frac{\Id\otimes\lambda^{\r^{-f_2}}\otimes\cdots\otimes\lambda^{\r^{f_m}}}
{\lambda(\r^{-1})^{i-1}\lambda(\r)^{m-i}}(\Gz(T_0))\right)
\nonumber\\
=\cdots &= |f_1\cdots f_m|\ \frac{(\qt\omega)^{\r^{-f_1}}\otimes\cdots\otimes(\qt\omega)^{\r^{f_m}}(\Gz(T_0))}
{\qt\omega(\r^{-1})^{i}\ \qt\omega(\r)^{m-i}}
\nonumber\\
& = \h(M)\ \tz(M)\ . \nonumber
\end{align}
This ends the proof of (ii) and hence the proof of Theorem \ref{mt}.



\begin{thebibliography}{CKS}

\bibitem[CKS]{cks}
Q. Chen, S. Kuppum and P. Srinivasan.
\newblock On the relation between the {WRT} invariant and the {H}ennings
              invariant.
\newblock {\em Math. Proc. Cambridge Philos. Soc.}, 146(1):151--163, 2009.

\bibitem[CK]{ck}
Q. Chen and T. Kerler.
\newblock Integral TQFTs from the quantum double construction.
\newblock {\em Preprint}.

\bibitem[CL1]{chenle1}
Q. Chen and T. Le.
\newblock Quantum invariants of periodic links and periodic 3-manifolds.
\newblock {\em Fund. Math.}, 184:55--71, 2004.

\bibitem[CL2]{chenle2}
Q. Chen and T. Le.
\newblock Almost integral {TQFT}s from simple {L}ie algebras.
\newblock {\em Algebr. Geom. Topol.}, 5:1291--13142, 2005.

\bibitem[DK]{dk}
C. De Concini and G. Kac.
\newblock Representations of quantum groups at roots of {$1$}.
\newblock In {\em Operator algebras, unitary representations, enveloping
              algebras, and invariant theory (Paris, 1989)}, volume~92
  of {\em Progr. Math.}, pages 471--506. Birkh\"auser Boston, Boston, MA. 1990

\bibitem[F]{feigin}
B. Feigin, A. Gainutdinov, A. Semikhatov, and I. Tipunin.
\newblock Modular group representations and fusion in logarithmic conformal
  field theories and in the quantum group center.
\newblock {\em Comm. Math. Phys.}, 265(1):47--93, 2006.

\bibitem[G]{g}
P. Gilmer.
\newblock Integrality for {TQFT}s.
\newblock {\em Duke Math. J.}, 125(2):389--413, 2004.

\bibitem[GKP]{gkp}
P. Gilmer, J. Kania-Bartoszy{\'n}ska and J. Przytycki.
\newblock 3-manifold invariants and periodicity of homology spheres.
\newblock {\em Algebr. Geom. Topol.}, 2:825--842, 2002.

\bibitem[H1]{habiro1}
K. Habiro.
\newblock Bottom tangles and universal invariants.
\newblock {\em Algebr. Geom. Topol.}, 6:1113--1214 (electronic), 2006.


\bibitem[H2]{habiro4}
K. Habiro.
\newblock A unified {W}itten-{R}eshetikhin-{T}uraev invariant for integral
  homology spheres.
\newblock {\em Invent. Math.}, 171(1):1--81, 2008.

\bibitem[He]{hennings}
M. Hennings.
\newblock Invariants of links and {$3$}-manifolds obtained from {H}opf
  algebras.
\newblock {\em J. London Math. Soc. (2)}, 54(3):594--624, 1996.

%

\bibitem[KR1]{kr1}
L. Kauffman and D. Radford.
\newblock A necessary and sufficient condition for a finite-dimensional
  {D}rinfeld double to be a ribbon {H}opf algebra.
\newblock {\em J. Algebra}, 159(1):98--114, 1993.

\bibitem[KR2]{kr}
L. Kauffman and D. Radford.
\newblock Invariants of {$3$}-manifolds derived from finite-dimensional {H}opf
  algebras.
\newblock {\em J. Knot Theory Ramifications}, 4(1):131--162, 1995.

\bibitem[K1]{kerler3}
T. Kerler.
\newblock Genealogy of non-perturbative quantum-invariants of {$3$}-manifolds:
  the surgical family.
\newblock In {\em Geometry and physics (Aarhus, 1995)}, volume 184 of {\em
  Lecture Notes in Pure and Appl. Math.}, pages 503--547. Dekker, New York,
  1997.

\bibitem[K2]{kerler2}
T. Kerler.
\newblock On the connectivity of cobordisms and half-projective {TQFT}'s.
\newblock {\em Comm. Math. Phys.}, 198(3):535--590, 1998.


\bibitem[KM]{km}
R. Kirby and P. Melvin.
\newblock The $3$-manifold invariants of {W}itten and {R}eshetikhin-{T}uraev
  for ${\rm sl}(2,{\bf {c}})$.
\newblock {\em Invent. Math.}, 105(3):473--545, 1991.

\bibitem[LL]{lili2}
B. Li and T. Li.
\newblock Generalized {G}aussian sums: {C}hern-{S}imons-{W}itten-{J}ones
  invariants of lens-spaces.
\newblock {\em J. Knot Theory Ramifications}, 5(2):183--224, 1996.

\bibitem[LS]{larson}
R. Larson and M. Sweedler.
\newblock An associative orthogonal bilinear form for {H}opf algebras.
\newblock {\em Amer. J. Math.}, 91:75--94, 1969.

\bibitem[M]{mu}
H. Murakami.
\newblock Quantum ${\rm {S}{U}}(2)$-invariants dominate {C}asson's ${\rm
              {S}{U}}(2)$-invariant.
\newblock {\em Math. Proc. Cambridge Philos. Soc.}, 115(2):253--281, 1994.

\bibitem[MR]{mr}
G. Mausbaum and J. Roberts.
\newblock A simple proof of integrality of quantum invariants at prime
              roots of unity.
\newblock {\em Math. Proc. Cambridge Philos. Soc.}, 121(3):443--454, 1997.

\bibitem[O1]{o3}
T. Ohtsuki.
\newblock Invariants of 3-manifolds derived from universal invariants of framed
  links.
\newblock {\em Math. Proc. Cambridge Philos. Soc.}, 117(3):259--273, 1995.

\bibitem[O2]{o1}
T. Ohtsuki.
\newblock A polynomial invariant of rational homology $3$-spheres.
\newblock {\em Invent. Math.}, 123(2):241--257, 1996.


\bibitem[RT]{rt}
N. Reshetikhin and V. Turaev.
\newblock Invariants of $3$-manifolds via link polynomials and quantum groups.
\newblock {\em Invent. Math.}, 103(3):547--597, 1991.

\bibitem[W]{w}
E. Witten.
\newblock Quantum field theory and the {J}ones polynomial.
\newblock {\em Comm. Math. Phys.}, 121(3):351--399, 1989.

\end{thebibliography}
\end{document}